\newcommand{\C}{\mathbb{C}}
\newcommand{\Aut}{\textnormal{Aut}}
\newcommand{\Der}{\textnormal{Der}}
\newcommand{\SplExt}{\textbf{SplExt}}
\newcommand{\bwnh}{\textbf{(WNH)}}
\newcommand{\bnh}{\textbf{(NH)}}
\newcommand{\wnh}{\textnormal{(WNH)}}
\newcommand{\nh}{\textnormal{(NH)}}
\newcommand{\comm}[2]{\gamma_{[#1, #2]}}
\newcommand{\como}[2]{[#1, #2]}
\newcommand{\spexc}[1]{\llbracket#1\rrbracket}
\newcommand{\ke}{\textnormal{ker}}
\newcommand{\coke}{\textnormal{coker}}
\newtheorem{theorem}{Theorem}[section]
\newtheorem{proposition}[theorem]{Proposition}
\newtheorem{corollary}[theorem]{Corollary}
\newtheorem{lemma}[theorem]{Lemma}
\newtheorem{example}[theorem]{Example}
\newtheorem{remark}[theorem]{Remark}
\begin{document}
\title{A note on split extension classifiers of perfect objects}
\author{J. R. A. Gray}
\maketitle
\begin{abstract}
We show that for a pointed protomodular category $\C$ 
satisfying a certain condition on those Huq commutators which exist, if $X$ is a
perfect object in $\C$ such
that
the split extension classifier $\spexc{X}$ exists, then the centralizer of
the \emph{conjugation} morphism
$c_X : X\to \spexc{X}$ is trivial and hence $\spexc{X}$ has trivial center.
\end{abstract}
\section{Introduction and preliminaries}
Recall that the automorphisms of a group commuting with every inner automorphism
are called central. For a group $X$ these automorphisms form a normal subgroup
$\Aut_c(X)$ of the automorphism group $\Aut(X)$ which has been the subject of
several papers whose study goes back at least to J.~E.~Adney and T.~Yen
\cite{ADNEY_YEN:1965}.
One of the methods of studying these automorphisms is via the so called
\emph{Adney-Yen}
map which is an injective map from $\Aut_c(X)$ to the set
of homomorphisms from $X$ to its center $Z(X)$.
As a immediate consequence of the existence of this injective map
one sees
that if $X$ is perfect, then $\Aut_c(X)$ is trivial.

Our main aim in
this paper is to generalize this result to, in particular,
arbitrary \emph{semi-abelian action representable} categories satisfying
a mild condition on commutators. The terminology will be recalled below.

As a special case we will
recover the following known fact about Lie algebras.
For a fixed commutative ring $R$
and a Lie algebra $X$ 
over $R$, the Lie algebra of derivations of $X$, $\Der(X)$, has underlying abelian group
the subgroup of $R$-linear maps $d:X\to X$ satisfying $d([x,y])=[d(x),y]+[x,d(y)]$
for each $x,y$ in $X$, and has Lie bracket defined by $[d,e]=d\circ e-e\circ d$.
For each $x$ in $X$, the map $y\mapsto [x,y]$ determines a derivation called
an inner derivation.
In analogy with the group theoretic case (in fact both
are special cases of the same categorical construction) derivations which
commute with (annihilate)
every inner derivation are called central and form an ideal of $\Der(X)$ which
we denote by $\Der_c(X)$. The analogous fact is that
if $X$ is perfect, then $\Der_c(X)$ is trivial.

We now recall the necessary background and introduce notation
so that we can precisely state our main aim.

For a pointed category $\C$ we will denote by $0$ a chosen
zero object (that is, an object which is both initial and terminal). For
objects $A$ and $B$ in $\C$ we will also denote by $0$
the unique morphism from $A$ to $B$ which factors through the zero
object $0$. The product of a pair of objects $A$ and $B$ will be written
$A\times B$ with first and second projection denoted by $\pi_1$ and $\pi_2$,
respectively. The coproduct will be written $A+B$ with coproduct inclusions
$\iota_1$ and $\iota_2$. For morphisms $u : W \to A$, $v: W\to B$, $f:A\to C$
and $g : B\to C$
we denote by $\langle u,v\rangle : W\to A\times B$ and
$[f,g]:A+B\to C$ the unique morphisms with
$\pi \langle u, v\rangle=u$, $\pi_2 \langle u,v\rangle = v$, $[f,g]\iota_1=f$
and $[f,g]\iota_2 = g$.

Recall that a split extension in a pointed category $\C$ is a
diagram
\begin{equation}
\label{diagram:split}
\vcenter{
\xymatrix{
X \ar[r]^{\kappa} & A\ar@<0.5ex>[r]^{\alpha} & B\ar@<0.5ex>[l]^{\beta}
}
}
\end{equation}
in $\C$ where $\alpha \beta = 1_B$ and $\kappa$ is the kernel of
$\alpha$. A morphism of split extensions in $\C$ is a diagram
\begin{equation}
\label{diagram:morphism_of_split}
\vcenter{
\xymatrix{
X \ar[r]^{\kappa}\ar[d]^{f} & A\ar@<0.5ex>[r]^{\alpha}\ar[d]^{g} &
B\ar@<0.5ex>[l]^{\beta}\ar[d]^{h}\\
Z\ar[r]_{\sigma} & C\ar@<0.5ex>[r]^{\gamma} & D\ar@<0.5ex>[l]^{\delta}
}
}
\end{equation}
such that the top and bottom rows are split extensions (the domain
and codomain, respectively) and $g\kappa = \sigma f$, $g\beta = \delta h$
and $h\alpha = \gamma g$. We denote by $\SplExt(\C)$ the category of
split extensions in $\C$, and by $K$ and $P$ the functors sending \eqref{diagram:split}
to $X$ and $B$ respectively,
and \eqref{diagram:morphism_of_split} to $f$ and $h$ respectively.
We denote by $K^{-1}(X)$ the fiber of the functor $K$ above $X$, that is
the subcategory of $\SplExt(\C)$ with objects and morphism mapped by $K$
to $X$ and $1_X$, respectively. Following F.\ Borceux, G.\ Janelidze and
G.\ M.\ Kelly
\cite{BORCEUX_JANELIDZE_KELLY:2005a}
 we call a terminal object in $K^{-1}(X)$ a generic split
extension with kernel $X$ which we will denote by
\begin{equation}
\label{diagram:generic_split}
\vcenter{
\xymatrix{
 X \ar[r]^-{k} & \spexc{X}\ltimes X \ar@<0.5ex>[r]^-{p_1} & \spexc{X}.\ar@<0.5ex>[l]^-{i}
}
}
\end{equation}
If $X$ is an object in a pointed category $\C$ admitting a generic
split extension with kernel $X$, then we will denote by $c_X : X\to \spexc{X}$
the morphism forming part of the unique morphism
\begin{equation}
\label{diagram:conj}
\vcenter{
\xymatrix{
X \ar[r]^-{\langle 0,1\rangle} \ar@{=}[d]
& X\times X \ar@<0.5ex>[r]^-{\pi_1}\ar[d]^{\phi}
&  X \ar@<0.5ex>[l]^-{\langle 1,1\rangle}\ar[d]^{c_X}\\
X \ar[r]^-{k} & \spexc{X}\ltimes X \ar@<0.5ex>[r]^-{p_1} & \spexc{X}\ar@<0.5ex>[l]^-{i}
}
}
\end{equation}
in $K^{-1}(X)$. Recall that there is also morphism
$p_2 : \spexc{X}\ltimes X\to \spexc{X}$ which together with $p_1$ and $i$ form part of
a canonical groupoid structure (see
\cite{BORCEUX_BOURN:2007}
or the preliminaries of
\cite{GRAY:2022}),
and furthermore $c_X = p_2 k$.
Note that if $\C$ is the category of groups, then $\spexc{X}=\Aut(X)$
and $c_X$ is the homomorphism sending each $x$ in $X$ to the inner
automorphism determined by conjugation 
(that is $x \mapsto (y \mapsto xyx^{-1})$) \cite{BORCEUX_JANELIDZE_KELLY:2005b}.
On the other hand if $\C$ is the category of Lie algebras over a
commutative ring, then $\spexc{X}=\Der(X)$ and $c_X$ is the homomorphism
sending each $x$ in $X$ to the inner derivation determined by $x$ (that is
$x\mapsto (y\mapsto [x,y])$) \cite{BORCEUX_JANELIDZE_KELLY:2005b}.

Recall that a pointed finitely complete category can equivalently be defined
to be (Bourn)-protomodular
\cite{BOURN:1991}
if it satisfies the split short five lemma, that is, for
each morphism of split extensions \eqref{diagram:morphism_of_split} if $f$ and
$h$ are isomorphisms, then $g$ is an isomorphism. A category is regular if
it has all finite limits, regular epimorphisms are pullback stable in it, and
the coequalizer of kernel pairs exist in it. A category is homological 
if it is a regular pointed protomodular category.
A pair of
morphisms $f:A \to C$ and $g: B\to C$ in a pointed protomodular category
$\C$ with binary products (Huq)-commute \cite{HUQ:1968} if there exists a morphism
$\varphi : A\times B \to C$ making the diagram
\[
\xymatrix{
A\ar[r]^-{\langle 1,0\rangle}\ar[dr]_{f} &
A\times B \ar[d]^{\varphi}
& B\ar[l]_-{\langle 0,1\rangle}\ar[dl]^{g}\\
& C &
}
\]
commute. For $f$ and $g$ as above we use the following (equivalent) definition
of the Huq-commutator (originally defined in \cite{HUQ:1968}).
The Huq commutator
$\comm{f}{g} : \como{(A,f)}{(B,g)} \to C$ of $f$ and $g$ is
initial amongst those normal monomorphisms $n$ with codomain $C$ such that
 $\coke(n)f$ and $\coke(n)g$ commute (where $\coke(n)$ is the cokernel of $n$). Note that we will write $\como{X}{X}$ instead of
$\como{(X,1_X)}{(X,1_X)}$.
The centralizer of a morphism $f: A\to C$ is the terminal object in
the category of morphisms which commute with $f$ and will be denoted
$z_f:Z_C(A,f)\to C$.
Recall also that an object
$X$ is commutative if $1_X$ commutes with $1_X$ and is perfect if the only
morphisms from it to commutative objects are zero morphisms.

Recall that a category $\C$ is (Barr)-exact
\cite{BARR:1971},
if it is regular, and equivalence relations
in $\C$ are effective (i.e.{} are the kernel pair of some morphism).
A pointed protomodular
category is semi-abelian
\cite{JANELIDZE_MARKI_THOLEN:2002}
if it has binary coproducts and is Barr-exact.
In agreement with the definitions in the semi-abelian context
(see \cite{GRAY_VAN_DER_LINDEN:2015} and \cite{CIGOLI_GRAY_VAN_DER_LINDEN:2015a},
respectively) we consider the following conditions on a pointed category $\C$:\\

\noindent\bwnh{}. For each normal monomorphism
$k : K\to X$ in $\C$ if the commutator $\comm{1_K}{1_K}$ exists,
then the morphism $k\comm{1_K}{1_K}$ is the commutator morphism of $k$ with itself.\\

\noindent\bnh{}. For monorphisms $m:X\to Y$ and $k:K\to X$ and $l:L\to X$ in $\C$
if $\comm{k}{l}$ exists and $mk$ and $ml$ are normal monomorphisms,
then $m\comm{k}{l}$ is the commutator morphism of $mk$ and $ml$.\\

It turns out that \nh{} and \wnh{} can be
rephrased in terms of Higgins commutators in the appropriate context where those
commutators exist (see \cite{GRAY_VAN_DER_LINDEN:2015} and \cite{CIGOLI_GRAY_VAN_DER_LINDEN:2015a} where, in fact,
\wnh{} and \nh{} were defined in terms of Higgins commutators).

Our main aim is to show that in a pointed protomodular category satisfying
\wnh{},
if $X$
is perfect and the generic split extension with kernel $X$ exists, then the
centralizer of $c_X$ is a zero morphism.

In addition we produce categorical explanations of the following facts:
\begin{enumerate}[(i)]
\item if $f:X\to Y$ is a central extension of groups, and $\cdot$ and
$*$ are actions of a group $B$ on $X$ such that, for each $b$ in $B$ 
each $x$ in $X$ we have that $f(b\cdot x)=f(b*x)$, then both actions
have the same restriction to $[X,X]$ (Proposition \ref{proposition:actions_2});
\item if $\theta:X\to X$ is a central automorphism, then for each
$x$ and $y$ in $X$ we have that $\theta([x,y])=[x,y]$ (Corollary \ref{corollary:a}).
\end{enumerate}

\section{Lifting extensions and properties of morphisms with kernel a central monomorphism}
In this section we establish some results about when split extensions can be
lifted and the uniqueness of such liftings, which we will need in final
section.

We begin by recalling some known facts.
It is well known and easy to prove for a central extension $f:X\to Y$ 
of groups and for $a, a', b$ and $b'$ in $X$
if  $f(a)=f(a')$, and $f(b)=f(b')$, then $[a,b]=[a',b']$. Categorically this
follows from the following fact
(which is essentially the same as Proposition 3.1 of \cite{BOURN_GRAN:2002}).
\begin{proposition}
\label{proposition:monomorphic behavior of central extensions}
Let $\C$ be a pointed protomodular category, $f:A\to C$ a morphism
in $\C$, and $k:K\to A$ the kernel of $f$. If $k$ is central, then
\begin{enumerate}[(a)]
 \item the pair $\varphi,\pi_2 : K\times A \to A$ (where $\varphi: K\times A\to A$ the
  cooperator of $k$ and $1_A$) is the
 kernel pair of $f$, and the composite $\langle 0,1\rangle \comm{1_A}{1_A}$ is the
  commutator morphism of $1_{K\times A}$ with itself (provided $\comm{1_A}{1_A}$ exists);
\item  For pair of morphisms $u,v : W\to A$ if $fu=fv$, then
 $u\comm{1_W}{1_W} = v\comm{1_W}{1_W}$ (provided $\comm{1_W}{1_W}$ exists).
\end{enumerate} 
\end{proposition}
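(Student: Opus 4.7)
The proposition splits naturally: part (a) is proved in two pieces, and part (b) follows essentially formally once (a) is in hand. For (a) the first assertion is a split-short-five argument identifying $(\varphi, \pi_2)$ with the actual kernel pair of $f$, while the second is a universal-property argument for the commutator, leveraging that centrality of $k$ forces $K$ itself to be a commutative object.

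For the kernel pair claim, the equation $f\varphi = f\pi_2$ can be verified on the jointly extremal-epimorphic pair $\langle 1,0\rangle,\langle 0,1\rangle : K, A\to K\times A$, where it reduces to $fk = 0$ and $f = f$. The induced map from $K\times A$ into the kernel pair of $f$ then fits into a morphism of split extensions, with splittings $\langle 0,1\rangle$ and the reflexivity morphism, respectively, and with the identity both on the common kernel $k : K\to A$ and on the quotient $A$; the split short five lemma forces this map to be an isomorphism. For the commutator claim, observe first that the cooperator $K\times K\to A$ of $(k,k)$ must factor through the monomorphism $k$, so $K$ is commutative and $\comm{1_K}{1_K}$ is the zero mono. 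By functoriality of the commutator applied to $\pi_1 : K\times A\to K$, the composite $\pi_1\comm{1_{K\times A}}{1_{K\times A}}$ factors through $\comm{1_K}{1_K}$ and hence vanishes, so $\comm{1_{K\times A}}{1_{K\times A}} = \langle 0,1\rangle m$ for some $m$; functoriality for $\pi_2$ then factors $m$ through $\comm{1_A}{1_A}$. The opposite containment follows by computing $\coke(\langle 0,1\rangle\comm{1_A}{1_A})$ as $1_K\times \coke(\comm{1_A}{1_A})$: its codomain is a product of commutative objects, hence commutative, so $\langle 0,1\rangle\comm{1_A}{1_A}$ satisfies the defining universal property of $\comm{1_{K\times A}}{1_{K\times A}}$.

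For (b), the hypothesis $fu = fv$ lets the pair $\langle u,v\rangle : W\to A\times A$ factor through the kernel pair, which by (a) is $\langle\varphi,\pi_2\rangle$; this yields $w : W\to K\times A$ with $\varphi w = u$ and $\pi_2 w = v$. By functoriality of the commutator, $w\comm{1_W}{1_W}$ factors as $\comm{1_{K\times A}}{1_{K\times A}}h$ for some $h : \como{W}{W}\to \como{K\times A}{K\times A}$, and by the second part of (a) this equals $\langle 0,1\rangle\comm{1_A}{1_A}h$. Applying $\varphi$ and $\pi_2$ in turn, and using $\varphi\langle 0,1\rangle = 1_A = \pi_2\langle 0,1\rangle$, gives $u\comm{1_W}{1_W} = \comm{1_A}{1_A}h = v\comm{1_W}{1_W}$. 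The main obstacle is the second half of (a): each ingredient (functoriality of the Huq commutator, explicit cokernel computation, stability of commutativity under products) is standard, but weaving them together in the bare pointed protomodular setting requires care, and the paper's reference to Proposition~3.1 of \cite{BOURN_GRAN:2002} signals that the cleanest route is to match that argument.
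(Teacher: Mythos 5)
Your first half of (a) is correct: checking $f\varphi=f\pi_2$ on the jointly strongly epimorphic pair $\langle 1,0\rangle,\langle 0,1\rangle$ and then applying the split short five lemma to $\langle\varphi,\pi_2\rangle$, viewed as a morphism of split extensions from $(K\times A,\pi_2,\langle 0,1\rangle)$ to $(A\times_C A,p_2,\langle 1,1\rangle)$ with identity on the kernel $K$ and on the base $A$, is exactly the expected argument (the paper offers no proof of this proposition, only the pointer to Proposition 3.1 of Bourn--Gran, and your route matches that style). Your observation that $K$ is commutative is also sound: the cooperator $\varphi(1_K\times k)$ of $(k,k)$ satisfies $f\varphi(1_K\times k)=f\pi_2(1_K\times k)=fk\pi_2=0$, so it factors through $k=\ke(f)$, and cancelling the monomorphism $k$ exhibits $K$ as commutative, whence $\comm{1_K}{1_K}=0$.

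The genuine gap is in your identification of the commutator of $K\times A$: with the paper's definition, $\comm{1_{K\times A}}{1_{K\times A}}$ is \emph{initial} (smallest) among normal monomorphisms $n$ into $K\times A$ such that $\coke(n)$ commutes with itself, and your two ``containments'' are in fact the \emph{same} containment. The functoriality step gives $\comm{1_{K\times A}}{1_{K\times A}}\le\langle 0,1\rangle\comm{1_A}{1_A}$ (and moreover presupposes that the left-hand commutator exists, which is part of what must be proved); but showing that $\coke(\langle 0,1\rangle\comm{1_A}{1_A})$ has commutative codomain only shows that $\langle 0,1\rangle\comm{1_A}{1_A}$ \emph{belongs to the defining class}, which, by initiality of the commutator, yields the very same inequality $\comm{1_{K\times A}}{1_{K\times A}}\le\langle 0,1\rangle\comm{1_A}{1_A}$. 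Membership in the class is not the universal property: what is missing is initiality of $\langle 0,1\rangle\comm{1_A}{1_A}$, i.e., that for \emph{every} normal monomorphism $n$ of $K\times A$ with $\coke(n)=r$ having commutative codomain one has $r\langle 0,1\rangle\comm{1_A}{1_A}=0$ --- equivalently, that any morphism from $A$ to a commutative object kills $\comm{1_A}{1_A}$. With the paper's cokernel-based definition this last principle is itself nontrivial: its usual proof factors the morphism through its regular image and uses that subobjects of commutative objects are commutative, which is available in a homological or semi-abelian category but not obviously in a bare pointed protomodular one. The same unproved principle silently underlies your other steps: the appeal to ``functoriality of the Huq commutator'' (your $\pi_1$-instance can be repaired directly, since $\langle 0,1\rangle=\ke(\pi_1)$, $\coke\langle 0,1\rangle=\pi_1$ and $K$ is commutative, so the big commutator factors through $\langle 0,1\rangle$ by initiality; but the instance along $w:W\to K\times A$ in (b) cannot be so repaired and reduces again to ``morphisms into the commutative object $K$ kill $\comm{1_W}{1_W}$''), and the formula $\coke(\langle 0,1\rangle\comm{1_A}{1_A})=1_K\times\coke(\comm{1_A}{1_A})$, whose factorization half amounts to $1_K\times q$ being a normal epimorphism --- a pullback-stability fact for regular epimorphisms that again holds in the homological setting but needs justification here. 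In short: in a homological category your outline can be completed, but as written it never establishes the minimality half of the universal property, and its supporting lemmas are quoted at a level of generality (bare pointed protomodular, with only partial existence of commutators) at which they are not standard.
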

We will also need the following slight strengthening of Lemma 2.6 of
\cite{CIGOLI_GRAY_VAN_DER_LINDEN:2015a}.
 \begin{lemma}
 \label{lemma:lifting}
  Let $\C$ be a pointed protomodular category.
  For each split extension as displayed in the bottom row of the diagram
  \[
   \xymatrix{
    X'\ar[d]_{u}\ar@{-->}[r]^{\kappa'}&
    A'\ar@{-->}@<2.15pt>[r]^{\alpha'}\ar@{-->}[d]^{v}&
    B\ar@{-->}@<2.15pt>[l]^{\beta'}\ar@{==}[d]\\
    X\ar[r]_{\kappa}&A\ar@<2.15pt>[r]^{\alpha}&B\ar@<2.15pt>[l]^{\beta}
   }
  \]
  there exists a unique (up to isomorphism) lifting along a monomorphism $u$ to a morphism of split
  extensions, as display with dotted arrows,
  as soon as $\kappa u$ is normal (or more generally Bourn-normal).
 \end{lemma}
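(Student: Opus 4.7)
The plan is to construct the lifting $A'$ explicitly as a pullback built from a morphism whose kernel is $\kappa u$, and then deduce uniqueness from the split short five lemma.

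Since $\kappa u$ is (Bourn\nobreakdash-)normal, it is the kernel of some morphism $q\colon A\to Q$. Because $\alpha \kappa u = 0$, the morphism $\alpha$ factors uniquely as $\alpha = rq$ for some $r\colon Q\to B$, and then $r(q\beta) = \alpha\beta = 1_B$, so $q\beta$ is a split monomorphism. I would define $A'$ to be the pullback of $q\beta$ along $q$, with projections $v\colon A'\to A$ and $\alpha'\colon A'\to B$; then $v$ is a monomorphism (being the pullback of a monomorphism), and a section $\beta'\colon B\to A'$ arises from the pair $(\beta,1_B)$ via the pullback universal property (noting $q\beta = q\beta\cdot 1_B$). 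Pasting the defining pullback with the pullback expressing $\ke(\alpha')$ along $0\to B$ identifies the kernel of $\alpha'$ with the pullback of $q$ along $0\to Q$, that is, with $\ke(q) = X'$, and this yields $\kappa'\colon X'\to A'$ with $v\kappa' = \kappa u$.

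For uniqueness, I would take a second lifting $(A'',v'',\kappa'',\alpha'',\beta'')$ and compare the two morphisms $qv'',\ q\beta\alpha''\colon A''\to Q$. Precomposing with $\kappa''$ yields $0$ on both sides (since $\alpha''\kappa''=0$ and $q\kappa u=0$), and precomposing with $\beta''$ yields $q\beta$ on both sides (since $\alpha''\beta''=1$ and $v''\beta''=\beta$). In a pointed protomodular category the kernel and section of a split extension are jointly strongly epic, so $qv'' = q\beta\alpha''$. The universal property of the pullback then produces a unique $w\colon A''\to A'$ with $vw = v''$ and $\alpha'w = \alpha''$; since $v$ is monic, automatically $w\kappa'' = \kappa'$ and $w\beta'' = \beta'$, and the split short five lemma (the very definition of protomodularity) upgrades $w$ to an isomorphism.

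The only delicate point, and indeed where the strengthening over Lemma~2.6 of \cite{CIGOLI_GRAY_VAN_DER_LINDEN:2015a} lives, is producing a morphism $q$ whose kernel is exactly $\kappa u$ when $\kappa u$ is merely Bourn\nobreakdash-normal rather than outright normal. In that case one takes $q$ to be the coequalizer of the equivalence relation normalized by $\kappa u$ and invokes protomodularity to ensure the kernel of this coequalizer recovers $\kappa u$; the rest of the argument then proceeds uniformly in the two cases.
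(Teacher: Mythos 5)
Your proposal is a genuinely self-contained construction, whereas the paper's proof is by citation (existence from Lemma~2.6 of \cite{CIGOLI_GRAY_VAN_DER_LINDEN:2015a}, uniqueness from Lemma~4.1 of \cite{GRAY:2013b}), so an explicit argument is welcome --- and in the case where $\kappa u$ is actually a kernel your pullback construction and your uniqueness argument are essentially sound, modulo one misstep: from $\alpha\kappa u=0$ you cannot conclude that $\alpha$ factors as $rq$. The morphism $q$ is merely \emph{some} morphism with kernel $\kappa u$; it has no couniversal property (and the cokernel of $\kappa u$ need not exist, since $\C$ is only assumed finitely complete). For instance, in abelian groups $q\colon \Z\to\Z$, $n\mapsto 2n$, has zero kernel, yet $1_\Z$ does not factor through it. The step is easily repaired, because splitness of $q\beta$ is never used --- only monicity is, for $v$ and for the cancellation steps in your uniqueness argument. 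And monicity holds: $\ke(q\beta)$ is the pullback of $\kappa u$ along $\beta$, which is $0$ (compose with $\alpha$ and use $\alpha\beta=1_B$, $\alpha\kappa u =0$), and in a pointed protomodular category a morphism with zero kernel is a monomorphism (apply the split short five lemma to the diagonal of its kernel pair). With that patch, the kernel case of your proof is complete; your appeal to the joint strong epimorphicity of $(\kappa'',\beta'')$ is exactly protomodularity, and the split short five lemma finishes uniqueness as you say.

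The genuine gap is the Bourn-normal case, which you dispatch in one sentence by ``taking $q$ to be the coequalizer of the equivalence relation normalized by $\kappa u$.'' That coequalizer need not exist: $\C$ is only finitely complete, and Bourn-normality is precisely the notion designed for monomorphisms that are normal to a (possibly non-effective) equivalence relation $R$ with no quotient available. So your construction collapses in exactly the case the parenthetical of the lemma is meant to cover. The repair is to work with $R$ itself in place of the (nonexistent) $Q$: define $A'$ as the pullback of $\langle d_0,d_1\rangle\colon R\to A\times A$ along $1_A\times\beta\colon A\times B\to A\times A$ (informally, pairs $(a,b)$ with $a\mathrel{R}\beta(b)$), with $v$ and $\alpha'$ the two projections; then $\ke(\alpha')$ is the zero class of $R$, i.e.\ $\kappa u$, and $\beta'$ comes from reflexivity of $R$. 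When $R$ is effective with quotient $q$ this recovers your $A\times_Q B$. Uniqueness adapts too: instead of $qv''=q\beta\alpha''$, one shows $\langle v'',\beta\alpha''\rangle\colon A''\to A\times A$ factors through $R$, by checking that both $\kappa''$ and $\beta''$ factor through the pullback of $R$ along it (via the zero class and reflexivity, respectively) and invoking joint strong epimorphicity of $(\kappa'',\beta'')$. Finally, a small misattribution: the ``delicate point'' where the paper strengthens Lemma~2.6 of \cite{CIGOLI_GRAY_VAN_DER_LINDEN:2015a} is not the Bourn-normal generality but the uniqueness statement, which the paper imports from Lemma~4.1 of \cite{GRAY:2013b}.
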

\begin{proof}
The existence of a lifting follows from Lemma 2.6 of
\cite{CIGOLI_GRAY_VAN_DER_LINDEN:2015a}.
Uniqueness follows from e.g.~Lemma 4.1 of \cite{GRAY:2013b}.
\end{proof}

It is easy to check and well known that for a central extension $f:X\to Y$
of a group if $\cdot, *$ are actions of a group $B$ on $X$ 
such that for each $x\in X$ and for each $b\in B$ we have that
$f(b\cdot x) = f(b*x)$, then both actions have the same restriction to
$\como{X}{X}$. We will see that this can be recovered from
the following categorical fact (which is closely related to
Lemma 3.10 of \cite{GRAY_VAN_DER_LINDEN:2015}):
\begin{proposition}\label{proposition_similtanius_lifting}
Let $\C$ be a pointed protomodular category satisfying \wnh.
For a morphism $f: X\to Z$ with kernel a central monomorphism and
for morphisms of split extensions displayed in the diagram
 \begin{equation}
  \vcenter{
   \label{diagram:morphisms_of_split_exts_with_the_same_morphism_at_kernel}
  \xymatrix{
  X
  \ar[r]^{\kappa_1}
  \ar[d]_{f}
  &
  A_1
  \ar@<2.00pt>[r]^{\alpha_1}
  \ar[d]_{g_1}
  &
  B\ar@<2.00pt>[l]^{\beta_1}
  \ar@{=}[d]\\
  Z
  \ar[r]_{\sigma}
  &
  C
  \ar@<2.00pt>[r]^{\gamma}
  &
  B
  \ar@<2.00pt>[l]^{\delta}
  \\
  X
  \ar[r]_{\kappa_2}
  \ar[u]^{f}
  &
  A_2
  \ar[u]^{g_2}
  \ar@<2.00pt>[r]^{\alpha_2}
  &
  B\ar@{=}[u]
  \ar@<2.00pt>[l]^{\beta_2}
 }
}
 \end{equation}
if $\comm{1_X}{1_X}$ exists, then the unique (up to isomorphism) liftings of the split
extensions at the top and bottom of
\eqref{diagram:morphisms_of_split_exts_with_the_same_morphism_at_kernel}
along $\comm{1_X}{1_X}$ are isomorphic.
\end{proposition}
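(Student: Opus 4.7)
The strategy is to construct a single common lifting $P'$ through which both liftings factor, and then invoke the uniqueness part of Lemma \ref{lemma:lifting} to conclude that $A_1'\cong P'\cong A_2'$. First, form the pullback $P = A_1\times_C A_2$ of $g_1$ and $g_2$, with projections $p_1\colon P\to A_1$ and $p_2\colon P\to A_2$. Using that $g_1\beta_1 = \delta = g_2\beta_2$ and $\gamma g_i = \alpha_i$ for $i=1,2$, the object $P$ inherits a split extension structure over $B$ with section $\langle\beta_1,\beta_2\rangle$ and projection $\alpha_1p_1 = \alpha_2p_2$. Since finite limits commute with kernels, its kernel $Q$ is the kernel pair of $f\colon X\to Z$, and the two kernel-pair maps $q_1,q_2\colon Q\to X$ are precisely the restrictions of $p_1$ and $p_2$ to $Q$.

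For the commutator computation, since $f$ has central kernel $k\colon K\to X$, Proposition \ref{proposition:monomorphic behavior of central extensions}(a) identifies $Q$ with $K\times X$ in such a way that (after a possible relabelling) $q_1 = \varphi$, the cooperator of $k$ and $1_X$, and $q_2 = \pi_2$; both satisfy $q_i\langle 0,1\rangle = 1_X$. The same proposition, applied using the hypothesis that $\comm{1_X}{1_X}$ exists, gives $\comm{1_Q}{1_Q} = \langle 0,1\rangle\comm{1_X}{1_X}$. Consequently $q_i\comm{1_Q}{1_Q} = \comm{1_X}{1_X}$ for $i=1,2$. Now lift $P$ along $\comm{1_Q}{1_Q}\colon \como{X}{X}\to Q$ via Lemma \ref{lemma:lifting}: the kernel $\kappa_Q\colon Q\to P$ is normal, so \wnh{} ensures that $\kappa_Q\comm{1_Q}{1_Q}$ is the commutator of $\kappa_Q$ with itself, hence a normal monomorphism, so the hypothesis of Lemma \ref{lemma:lifting} is satisfied. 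Let $\como{X}{X}\to P'\to B$ together with $v_P\colon P'\to P$ denote the resulting lift.

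For each $i\in\{1,2\}$, the composite $p_iv_P\colon P'\to A_i$ is a morphism of split extensions whose kernel-component is $q_i\comm{1_Q}{1_Q} = \comm{1_X}{1_X}$. Since $\kappa_i\comm{1_X}{1_X}$ is normal (again by \wnh{} applied to the normal monomorphism $\kappa_i$), the pair $(P', p_iv_P)$ is a lifting of $A_i$ along $\comm{1_X}{1_X}$. The uniqueness clause of Lemma \ref{lemma:lifting} then forces $P'\cong A_i'$ as split extensions, compatibly with the maps into $A_i$, and therefore $A_1'\cong P'\cong A_2'$, as required. The one step that must be handled with care is the identification in the middle paragraph: once one confirms that the kernel of the pullback $P$ really is the kernel pair of $f$ and that Proposition \ref{proposition:monomorphic behavior of central extensions} transports the commutator computation across this identification, the uniqueness of lifts from Lemma \ref{lemma:lifting} delivers the isomorphism essentially for free.
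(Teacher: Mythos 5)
Your proposal is correct and follows essentially the same route as the paper: both form the pullback $A_1\times_C A_2$ of $g_1$ and $g_2$ as a split extension over $B$ whose kernel is the kernel pair of $f$, identify its commutator via Proposition \ref{proposition:monomorphic behavior of central extensions}(a) (your $\langle 0,1\rangle\comm{1_X}{1_X}$ is the paper's $\langle\comm{1_X}{1_X},\comm{1_X}{1_X}\rangle$ under the identification of the kernel pair with $K\times X$), invoke \wnh{} to get the normality needed for Lemma \ref{lemma:lifting}, and then project the resulting common lift onto the two extensions. Your write-up is in fact slightly more explicit than the paper's in checking the normality of $\kappa_i\comm{1_X}{1_X}$ before appealing to uniqueness, but there is no substantive difference.
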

\begin{proof}
 We begin by forming the pullback
 \begin{equation}
  \vcenter{
   \label{diagram:pullback}
\xymatrix@!@C=-1ex@R=-2ex{
 X\times_Z X
  \ar[dd]_{\pi_1}
  \ar[rr]^-{{\kappa_1\times \kappa_2}}
  \ar[rd]^{\pi_2}&&
{A_1\times_C A_2}
 \ar[dd]^(0.30){{\pi_1}}|\hole
\ar@<2.00pt>[rr]^{{\alpha_1\pi_1}}
\ar[rd]^{{\pi_2}}&&
B
 \ar@{=}[dd]|(0.48)\hole|(0.51)\hole
 \ar@<2.00pt>[ll]^(0.4){{\langle \beta_1,\beta_2\rangle}}\ar@{=}[rd]&\\
&X\ar[rr]^(0.30){\kappa_2}\ar[dd]^(0.25){f}&&
A_2\ar[dd]^(0.3){g_2}\ar@<2.00pt>[rr]^(0.30){\alpha_2}&&
B\ar@{=}[dd]\ar@<2.00pt>[ll]^(0.70){\beta_2}\\
X\ar[rr]^(0.30){\kappa_1}|\hole\ar[rd]_{f}&&
A_1\ar@<2.00pt>[rr]^(0.30){\alpha_1}|\hole\ar[rd]_{g_2}&&
B\ar@<2.00pt>[ll]|\hole^(0.70){\beta_1}\ar@{=}[rd]&\\
&Z
\ar[rr]_{\sigma}&&
C
\ar@<2.00pt>[rr]^{\gamma}&&
B
\ar@<2.00pt>[ll]^{\delta}
}
 }
\end{equation}
in $\SplExt(\C)$.
Since by Proposition
 \ref{proposition:monomorphic behavior of central extensions},
 $\langle \comm{1_X}{1_X},\comm{1_X}{1_X}\rangle$ is the 
 commutator morphism of $1_{X\times_Z X}$ and itself, it follows
 from \wnh{} that
 $(\kappa_1\times\kappa_2) \langle \comm{1_X}{1_X},\comm{1_X}{1_X}\rangle$
 is normal. Therefore, Lemma \ref{lemma:lifting} produces a morphism
 \[
  \xymatrix{
 \como{X}{X}
 \ar[d]_{\langle\comm{1_X}{1_X},\comm{1_X}{1_X}\rangle}\ar[r]^-{{\kappa}}
 &
{A}\ar[d]^{{u_1}}
 \ar@<2.00pt>[r]^{{\alpha}}
 &
B
 \ar@{=}[d]
\ar@<2.00pt>[l]^{{\beta}}
 \\
 X\times_Z X
  \ar[r]_-{{\kappa_1\times \kappa_2}}
  &
{A_1\times_C A_2}
 \ar@<2.00pt>[r]^-{\alpha_1\pi_1}&
B,
 \ar@<2.00pt>[l]^-{\langle \beta_1,\beta_2\rangle}
}
 \]
 which after composing with the morphisms forming the upper and back faces
 of
 \eqref{diagram:pullback}
 produce liftings of the split extensions at the top
 and bottom of
 \eqref{diagram:morphisms_of_split_exts_with_the_same_morphism_at_kernel}
with same domain, as desired. 
\end{proof}
For a semi-abelian category $\C$ and an object $B$ in $\C$ let us
recall from \cite{BOURN_JANELIDZE:1998} that
the functor sending a split extension in $P^{-1}(B)$ (the fiber of
$P:\SplExt(\C)\to \C$ above $B$) to its kernel
turns out to be monadic, and the algebras of this monad, whose underlying
functor we denote by $B\flat -$, are called internal $B$-actions.
The induced equivalence associates to each split extension
\eqref{diagram:split}
the pair $(X,\chi)$, where $\chi$ is
the unique morphism making the diagram
\[
\xymatrix{
B\flat X
\ar[r]^{k}
\ar[d]_{\chi}
&
B+X
\ar@<0.5ex>[r]^-{[1,0]}
\ar[d]_{[\beta,\kappa]}
&
B
\ar@<0.5ex>[l]^-{\iota_1}
\ar@{=}[d]
\\
X
\ar[r]^{\kappa}
&
A\ar@<0.5ex>[r]^{\alpha}
&
B,
\ar@<0.5ex>[l]^{\beta}
\\
}
\]
in which $k$ is the kernel of $[1,0]$, commute. In the case where $\C$
is the category of groups,
it turns out that $B\flat X = \coprod_{b\in B} X$ and the internal $B$-actions
are in one to one correspondence with the usual $B$-actions.  Indeed, given $\chi$
as above for each $b$ in $B$ and each $x$ in $X$ defining $b\cdot x = \chi(\iota_b(x))$ - where
$\iota_b : X\to \coprod_{b\in B} X$ is a coproduct injection -
produces a $B$-action of $B$ on $X$. Translating Proposition
\ref{proposition_similtanius_lifting}
via this equivalence we obtain:
\begin{proposition}\label{proposition:actions}
Let $\C$ be a semi-abelian category satisfying \wnh. If $f : X\to Z$ is a morphism
in $\C$ with kernel central, and $\chi_1,\chi_2: B\flat X \to X$ and $\zeta: B\flat Z\to Z$
are internal $B$-actions such that $f$ is an internal $B$-action morphism from both
$(X,\chi_1)$ and $(X,\chi_2)$ to $(Z,\zeta)$, then 
$\chi_1$ and $\chi_2$ restrict to the same internal $B$-action on $[X,X]$.
\end{proposition}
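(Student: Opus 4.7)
The plan is to apply the monadic equivalence between the fiber $P^{-1}(B)$ and the category of internal $B$-actions to reduce the statement to Proposition \ref{proposition_similtanius_lifting}, and then translate the conclusion back into the language of actions.

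First, I would use the equivalence to reformulate the hypotheses. The three internal actions $\chi_1$, $\chi_2$, and $\zeta$ correspond to split extensions with kernels $X$, $X$, and $Z$ respectively, all with base $B$; write their middle objects as $A_1$, $A_2$, and $C$. The assumption that $f$ is an internal $B$-action morphism from $(X, \chi_i)$ to $(Z, \zeta)$ corresponds, for each $i \in \{1, 2\}$, to a morphism of split extensions with components $(f, g_i, 1_B)$. Assembling these yields exactly a diagram of the shape of \eqref{diagram:morphisms_of_split_exts_with_the_same_morphism_at_kernel}; moreover, $\comm{1_X}{1_X} : \como{X}{X} \to X$ exists because $\C$ is semi-abelian.

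Second, I would invoke Proposition \ref{proposition_similtanius_lifting}: it produces a single split extension over $B$ with kernel $\como{X}{X}$, whose middle object we denote $A'$, that simultaneously lifts the top and bottom split extensions of \eqref{diagram:morphisms_of_split_exts_with_the_same_morphism_at_kernel} along $\comm{1_X}{1_X}$. Concretely, one obtains morphisms $A' \to A_i$ over $B$ whose kernel components are both equal to $\comm{1_X}{1_X}$.

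Third, I would translate the conclusion back. Under the equivalence, the lifted split extension corresponds to an internal $B$-action $\chi' : B \flat \como{X}{X} \to \como{X}{X}$, and the two morphisms of split extensions $A' \to A_i$ translate into the equations
\[
\chi_i \circ (B \flat \comm{1_X}{1_X}) = \comm{1_X}{1_X} \circ \chi' \qquad (i = 1, 2),
\]
which directly say that both $\chi_1$ and $\chi_2$ restrict to the same action $\chi'$ on $\como{X}{X}$.

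The main thing to verify carefully is that the monadic equivalence sends liftings of split extensions along a monomorphism at the kernel to restrictions of the associated internal actions along that subobject; this is a naturality-type check that is routine given the explicit description of the equivalence recalled before the statement. With that in hand, the rest of the argument is essentially diagrammatic bookkeeping on top of Proposition \ref{proposition_similtanius_lifting}.
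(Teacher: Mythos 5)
Your proposal is correct and is exactly the paper's argument: the paper gives no separate proof, stating the proposition as the translation of Proposition \ref{proposition_similtanius_lifting} through the monadic equivalence between $P^{-1}(B)$ and internal $B$-actions, which is precisely your reduction (including the observation that $\comm{1_X}{1_X}$ exists since $\C$ is semi-abelian, and that a lifting along $\comm{1_X}{1_X}$ with identity on $B$ corresponds to a common restriction $\chi'$ of $\chi_1$ and $\chi_2$ to $\como{X}{X}$).
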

Recall that a normal monomorphism $k:K\to X$ in a pointed category $\C$
is characteristic (in
agreement with the definition in \cite{CIGOLI_MONTOLI:2015}) if for
each normal monomorphism $n:X\to Y$ the composite $nk$ is a normal monomorphism.
Recalling also that in the category of groups the center of each object exists
and is characteristic we can recover the
group theoretical fact mentioned just above Proposition
\ref{proposition_similtanius_lifting} as a special case of the following
proposition. 
\begin{proposition} \label{proposition:actions_2}
Let $\C$ be a semi-abelian category satisfying \wnh{} in which the center of
each object exists and is characteristic. 
If $f : X\to Z$ is a morphism
in $\C$ with kernel central and $\chi_1,\chi_2: B\flat X \to X$ 
are internal $B$-actions such that $f\chi_1=f\chi_2$, then 
$\chi_1$ and $\chi_2$ restrict to the same internal  $B$-action on $[X,X]$.
\end{proposition}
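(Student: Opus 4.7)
The plan is to reduce the statement to Proposition \ref{proposition:actions} by replacing $f$ with the canonical quotient $\pi: X \to X/Z(X)$; since by definition of the centre this map has central kernel, the kernel hypothesis of Proposition \ref{proposition:actions} is then automatic.

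First I will verify that the two actions descend along $\pi$. For each $i=1,2$, the semidirect-product extension $X \to B \ltimes_{\chi_i} X \to B$ exhibits $X$ as a normal subobject, so because $Z(X)$ is characteristic, the composite $Z(X) \to X \to B\ltimes_{\chi_i} X$ is normal. Quotienting by this subobject produces a split extension with kernel $X/Z(X)$, i.e.\ an internal action $\bar\chi_i$ of $B$ on $X/Z(X)$, and the construction makes $\pi$ an internal $B$-action morphism from $(X, \chi_i)$ to $(X/Z(X), \bar\chi_i)$.

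Next I will show that $\bar\chi_1 = \bar\chi_2$. Taking the regular-epi/mono factorization $f = mq$ with $q: X \to X/\ke(f)$, the equality $f\chi_1 = f\chi_2$ combined with $m$ mono gives $q\chi_1 = q\chi_2$; and since $\ke(f) \subseteq Z(X) = \ke(\pi)$, the morphism $\pi$ factors through $q$, so $\pi\chi_1 = \pi\chi_2$. The actions $\bar\chi_i$ are characterised by $\bar\chi_i(B\flat\pi) = \pi\chi_i$; since $B\flat -$ preserves regular epimorphisms in a semi-abelian category (applied to the short exact sequence $B\flat(-) \to B + (-) \to B$ together with the split short five lemma), the morphism $B\flat\pi$ is epic, and hence $\bar\chi_1 = \bar\chi_2$.

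Writing $\bar\chi$ for this common action, $\pi$ is now an internal $B$-action morphism from both $(X,\chi_1)$ and $(X,\chi_2)$ to $(X/Z(X), \bar\chi)$, and $\pi$ has central kernel, so Proposition \ref{proposition:actions} applied with $\pi$ in place of $f$ yields that $\chi_1$ and $\chi_2$ restrict to the same internal $B$-action on $\como{X}{X}$. The chief subtlety is the middle paragraph: upgrading the easily obtained equality $\pi\chi_1 = \pi\chi_2$ to the genuine equality $\bar\chi_1 = \bar\chi_2$ of induced actions, which is where regularity properties of $B\flat -$ and the characteristic hypothesis on $Z(X)$ (invoked implicitly when defining $\bar\chi_i$) both play a role.
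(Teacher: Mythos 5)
Your proof is correct and takes essentially the same route as the paper: pass to the quotient $\pi : X \to X/Z(X)$ (using that the characteristic center $Z(X)$ is normal in $B\ltimes_{\chi_i}X$, so the actions descend --- this is exactly Proposition 2.7 of Cigoli--Montoli, which the paper cites and you re-derive), obtain $\pi\chi_1 = \pi\chi_2$ from the image factorization of $f$ (the paper's ``without loss of generality $f$ is a normal epimorphism''), and conclude by Proposition \ref{proposition:actions}. Your middle step proving $\bar\chi_1 = \bar\chi_2$ via epimorphicity of $B\flat\pi$ is sound but slightly more than needed: the paper instead notes that $\pi\chi_2 = \pi\chi_1 = \bar\chi_1(B\flat\pi)$ already exhibits $\pi$ as an action morphism from $(X,\chi_2)$ to the single induced action, so no preservation property of $B\flat-$ is required.
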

\begin{proof}
Without loss of generality we may assume that $f$ is a normal
epimorphism (since otherwise factorizing $f$ as $me$ where
$m$ is a monomorphism and $e$ is a normal epimorphism produces a normal
epimorphism
$e$ with same properties as $f$). Since the kernel of $f$ is central
it follows that it factors through $z_X$, the center of $X$, and
hence, $q:X\to Y$, the cokernel of $z_X$, factors through $f$.
Since $z_X$ is characteristic, it follows by Proposition 2.7 of
\cite{CIGOLI_MONTOLI:2015} that there is a unique morphism $\zeta : B\flat Y\to Y$
such that $q$ is an internal $B$-action morphism from $(X,\chi_1)$ to $(Y,\zeta)$.
Since $f\chi_1=f\chi_2$, it follows that $q\chi_1=q\chi_2$ and so $q$ is also a morphism
from $(X,\chi_2)$ to $(Y,\zeta)$. We conclude via the Proposition
\ref{proposition:actions}. 
\end{proof}
\begin{remark}
It seems worth mentioning that if a category $\C$ has centralizers of
normal monomorphisms which are normal, then centers exist and are
characteristic. In such a category one easily proves that if $k:X\to Y$ is
normal monomorphism, then $k\wedge z_k \cong kz_X$.
\end{remark}
\section{Split extension classifiers of perfect objects}
In this section we state and prove our main results.

Recall that a reflexive graph $(A,B,\alpha_1,\alpha_2,\beta)$ in a category
$\C$ is a quadruple where $A$ and $B$ are objects in $\C$, and
$\alpha_1, \alpha_2: A\to B$ and $\beta : B\to A$ are morphisms in $\C$
such that $\alpha_1\beta = 1_B=\alpha_2\beta$. Recall also that a split
extension \eqref{diagram:split} is called faithful if there is at most
one morphism in $K^{-1}(X)$ from any split extension in $K^{-1}(X)$.
We have:
\begin{theorem}
\label{main_theorem}
 Let $\C$ be a pointed protomodular category satisfying \wnh{},
  let $(A,B,\alpha_1,\alpha_2,\beta)$ be a reflexive graph in $\C$, and let
$\kappa:X\to A$ be the kernel of $\alpha_1$. 
\begin{enumerate}[(a)]
\item \label{main_1} If a morphism $g:Y\to B$ commutes
with $\alpha_2 \kappa$ and
$\comm{1_X}{1_X}$ exists, then
 $\kappa \comm{1_X}{1_X}$ and $\beta g$ commute.
\item \label{main_2}If $X$ is perfect and the split extension 
\[
\xymatrix{
X \ar[r]^{\kappa} & A\ar@<0.5ex>[r]^{\alpha_1} & B\ar@<0.5ex>[l]^{\beta}
}
\]
is faithful, then the centralizer of $\alpha_2 \kappa$ is trivial.
\end{enumerate}
\end{theorem}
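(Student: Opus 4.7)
My plan is to address the two parts separately, reducing part (\ref{main_2}) to part (\ref{main_1}) via the perfectness of $X$ and the faithfulness hypothesis.

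For part (\ref{main_1}), the strategy is to apply Proposition \ref{proposition_similtanius_lifting}, which encodes the \wnh{} hypothesis through Lemma \ref{lemma:lifting}. Let $\psi \colon X \times Y \to B$ denote the cooperator of $\alpha_2 \kappa$ and $g$, and pull the given split extension back along $\psi$ to a split extension in $\SplExt(\C)$ with kernel $X$ over $X \times Y$. The driving observation is that although $\kappa$ and $\beta \alpha_2 \kappa$ differ as morphisms $X \to A$, they agree after composition with $\alpha_2$: since $\alpha_2 \beta = 1_B$, one has $\alpha_2 \beta \alpha_2 \kappa = \alpha_2 \kappa$. Combined with the reflexive graph structure, this yields two different morphisms of split extensions arising from the pullback; applying Proposition \ref{proposition_similtanius_lifting} to them asserts that their liftings along $\comm{1_X}{1_X}$ agree up to isomorphism, and the desired cooperator $\varphi \colon \como{X}{X} \times Y \to A$ for $\kappa \comm{1_X}{1_X}$ and $\beta g$ will be extracted from this agreement. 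The main obstacle will be identifying the precise pair of morphisms of split extensions that fit the hypotheses of Proposition \ref{proposition_similtanius_lifting}, and verifying the central-kernel requirement for the morphism $f$ appearing there.

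For part (\ref{main_2}), I apply part (\ref{main_1}) with $g = z$, where $z \colon Z \to B$ denotes the centralizer of $\alpha_2 \kappa$, so that $\kappa \comm{1_X}{1_X}$ commutes with $\beta z$ in $A$. Since $X$ is perfect, the cokernel of $\comm{1_X}{1_X}$ is a morphism from $X$ to the commutative object $X / \como{X}{X}$, and hence is the zero morphism; consequently $\comm{1_X}{1_X}$, being the kernel of the zero morphism, equals $1_X$ up to isomorphism. Thus $\kappa$ itself commutes with $\beta z$, and we obtain a cooperator $\varphi \colon X \times Z \to A$.

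The triple $(1_X, \varphi, z)$ defines a morphism in $K^{-1}(X)$ from the trivial split extension
\[
\xymatrix{
X \ar[r]^-{\langle 1,0 \rangle} & X \times Z \ar@<0.5ex>[r]^-{\pi_2} & Z \ar@<0.5ex>[l]^-{\langle 0,1 \rangle}
}
\]
to the given split extension, as does the triple $(1_X, \kappa \pi_1, 0)$. Faithfulness of the given split extension forces these two morphisms to coincide, and comparing their third components yields $z = 0$. Hence every morphism that commutes with $\alpha_2 \kappa$ is the zero morphism, and the centralizer of $\alpha_2 \kappa$ is trivial.
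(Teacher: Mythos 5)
Your part (\ref{main_2}) is correct, and in fact more self-contained than the paper's: having reduced, via perfectness (your argument that $\comm{1_X}{1_X}$ is an isomorphism is exactly the step the paper asserts), to the statement that $\kappa$ and $\beta g$ commute, the paper cites Proposition 5.2 of \cite{BOURN_JANELIDZE:2009} (``faithful implies eccentric''), whereas you reprove that step directly: the cooperator $(1_X,\varphi,z)$ and the trivial morphism $(1_X,\kappa\pi_1,0)$ are two morphisms in $K^{-1}(X)$ out of the trivial extension over $Z$, and faithfulness forces $z=0$. Two small repairs: you should verify $\alpha_1\varphi=z\pi_2$ (it holds because $\langle 1,0\rangle,\langle 0,1\rangle$ are jointly epimorphic in a pointed protomodular, hence unital, category), and you should run the argument for an arbitrary $g$ commuting with $\alpha_2\kappa$ rather than for the centralizer $z$, whose existence as an object is not among the hypotheses (your argument works verbatim). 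Part (\ref{main_1}), however, is where the substance of the theorem lies, and you have not proved it: you name the tools (Proposition \ref{proposition_similtanius_lifting} via Lemma \ref{lemma:lifting}) and then explicitly defer the two essential steps, namely producing the pair of split-extension morphisms and the central-kernel condition. That pair is the entire content of the paper's proof: one pulls back $\alpha_1$ along $g$ (not along the cooperator $\psi\colon X\times Y\to B$, which yields an extension over $X\times Y$ of the wrong shape --- Proposition \ref{proposition_similtanius_lifting} needs two extensions over the \emph{same} base $Y$, the identity on $Y$, and the same kernel-level morphism $f$) to get $\tilde A$ over $Y$; one maps $\tilde A$ to the extension $B\to Y\times B\rightleftarrows Y$ via $\langle\tilde\alpha,\alpha_2 h\rangle$ --- it is here, in checking $\langle\alpha_1,\alpha_2\rangle\beta=\langle 1,1\rangle$, that the reflexive-graph identity $\alpha_2\beta=1_B$ enters, rather than your observation $\alpha_2\beta\alpha_2\kappa=\alpha_2\kappa$ --- and one maps the trivial extension $Y\times X$ there via $\langle\pi_1,\varphi\rangle$, with $\varphi$ the cooperator of $f=\alpha_2\kappa$ and $g$. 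These two morphisms share the kernel map $f$; the proposition then gives a lifting $u\colon Y\times\como{X}{X}\to\tilde A$ along $\comm{1_X}{1_X}$, and $hu$ is the desired cooperator. None of this construction appears in your proposal.

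Moreover, the central-kernel requirement you flag as an ``obstacle'' is not a formality that verification will dispose of: Proposition \ref{proposition_similtanius_lifting} is invoked with $f=\alpha_2\kappa$, and for a bare reflexive graph $\ker(\alpha_2\kappa)$ need not be central in $X$. Indeed, in groups take $A=S_3\times\Z/2\Z$, $B=\Z/2\Z$, $\alpha_1=\pi_2$, $\alpha_2=\mathrm{sgn}\,\pi_1$, $\beta(b)=((1\,2),b)$ and $g=1_B$: the hypothesis of (\ref{main_1}) holds since $B$ is abelian, $\ker(\alpha_2\kappa)=A_3$ is not central in $X\cong S_3$, and $\como{X}{X}=A_3$ visibly does not commute with $\beta(B)$ in $A$, so the required lifting $u$ cannot exist. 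So some centrality input is genuinely needed at this point; it is available in the intended applications (Corollaries \ref{corollary:a} and \ref{corollary:b}), where the reflexive graph underlies the groupoid of the generic split extension and $\ker c_X$ is the center of $X$, but it does not follow from the hypotheses you (or, for that matter, the theorem's statement) have to work with. As written, then, your plan for (\ref{main_1}) leaves open precisely the step on which the whole argument turns, and your proposed opening move (pulling back along $\psi$) points away from the configuration in which Proposition \ref{proposition_similtanius_lifting} can be applied.
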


\begin{proof}
Suppose $\comm{1_X}{1_X}$ exists and that $g:Y\to B$ is a morphism commuting with $f=\alpha_2\kappa$. Consider the diagram
\begin{equation}
\label{big_diagram}
\vcenter{
\xymatrix@!@C=-4ex@R=-4ex{
 &&\como{X}{X}
 \ar[ld]_{\comm{1_X}{1_X}}
 \ar[ddd]^(0.18){\comm{1_X}{1_X}}|(0.33){\hole}|(0.67){\hole}
\ar[rrr]^-{\langle 0,1\rangle}
&&&
 Y\times \como{X}{X}
\ar@<0.5ex>[rrr]^{\pi_1}
\ar@{-->}[ld]_{u}
 \ar[ddd]^(0.18){1\times \comm{1_X}{1_X}}
|(0.3){\hole}
|(0.36){\hole}
|(0.64){\hole}
|(0.7){\hole}
&&&
Y
\ar@{=}[ld]
\ar@{=}[ddd]
\ar@<0.5ex>[lll]^{\langle 1,0\rangle}
\\
&X
\ar@{=}[ld]
\ar[rrr]^{\tilde \kappa}
\ar[ddd]^(0.18){f}|(0.33){\hole}
&&&
\tilde A
\ar@<0.5ex>[rrr]^(0.67){\tilde \alpha}
\ar[ddd]^(0.16){\langle\tilde \alpha,\alpha_2 h\rangle}|(0.3){\hole}|(0.36){\hole}
\ar[ld]_{h}
&&&
Y
\ar[ld]^(0.6){g}
\ar@{=}[ddd]
\ar@<0.5ex>[lll]^(0.33){\tilde \beta}
&
\\
X
\ar[ddd]_{f}
\ar[rrr]^{\kappa}
&&&
A
\ar@<0.5ex>[rrr]^{\alpha_1}
 \ar[ddd]_(0.45){\langle \alpha_1,\alpha_2\rangle}
&&&
B
\ar@{=}[ddd]
\ar@<0.5ex>[lll]^{\beta}
&&
\\
&&
X
\ar[ld]^{f}
\ar[rrr]|(0.33){\hole}^{\langle 0,1\rangle}|(0.67){\hole}
&&&
Y\times X
\ar@<0.5ex>[rrr]|(0.33){\hole}^{\pi_1}|(0.67){\hole}
\ar[ld]^(0.4){\langle \pi_1,\varphi\rangle}
&&&
Y
\ar@{=}[ld]
\ar@<0.5ex>[lll]|(0.33){\hole}^{\langle 1,0\rangle}|(0.67){\hole}
\\
&
B
\ar@{=}[ld]
\ar[rrr]^(0.45){\langle 0,1\rangle}|(0.67){\hole}
&&&
Y\times B
\ar@<0.5ex>[rrr]^{\pi_1}|(0.67){\hole}
\ar[ld]^(0.3){g\times 1}
&&&
Y
\ar[ld]^{g}
\ar@<0.5ex>[lll]|(0.33){\hole}^{\langle 1,g\rangle}
&
\\
B
\ar[rrr]^{\langle 0,1\rangle}
&&&
B\times B
\ar@<0.5ex>[rrr]^{\pi_1}
&&&
B
\ar@<0.5ex>[lll]^{\langle 1,1\rangle}
}
}
\end{equation}
in which
\begin{description}
 \item{-} $\varphi$ is the cooperator of $f$ and $g$;
\item{-} the object $(\tilde A,h,\tilde \alpha)$ is the pullback of
 $\alpha_1$ and $g$;
\item{-} the morphisms $\tilde \kappa$ and $\tilde \beta$ are the unique
 morphisms such that $h\tilde \kappa = \kappa$, $\alpha \tilde \kappa=0$,
  $h\tilde \beta = \beta g$
  and $\tilde \alpha \tilde \beta=1_Y$.
\end{description}
According to Proposition \ref{proposition_similtanius_lifting} (via
Lemma \ref{lemma:lifting}) there must be a
morphism $u:Y\times [X,X]\to \tilde A$ such that Diagram \eqref{big_diagram} consists of morphisms of split extensions.
The proof of \eqref{main_1} is completed by noting that $hu$ produces
a cooperator for $\kappa \comm{1_X}{1_X}$ and $\beta g$.
To prove \eqref{main_2}, we note that \eqref{main_1} tells us that
for any $g$ commuting with $f$, the morphisms $\kappa \comm{1_X}{1_X}$ and $\beta g$ commute.
Therefore, when $X$ is perfect $\kappa$ and $\beta g$ commute,
since $\comm{1_X}{1_X}$ is an isomorphism. But according to Proposition 5.2 of
\cite{BOURN_JANELIDZE:2009} (see also  Lemma 2.4 of \cite{CIGOLI_MANTOVANI:2012})
this means that $g=0$.
\end{proof}
\begin{remark}  
Recall that a split extension \eqref{diagram:split} can be equivalently defined
to be eccentric \cite{BOURN:2013b}, if every morphism $g:Y\to B$, such that
$\beta g$ and $\kappa$ commute, is a zero morphism. Noting that, the above
mentioned, Proposition 5.2 of \cite{BOURN_JANELIDZE:2009} which, using this language,
shows that ``faithful'' implies ``eccentric'', one can generalize (b) of
Theorem \ref{main_theorem} by replacing
``faithful'' with ``eccentric''.
\end{remark}
Suppose that $X$ is an object, in a pointed protomodular category,
such that $\spexc{X}$ exists.
Since the generic split extension \eqref{diagram:generic_split} is
necessarily faithful and (as recalled above) forms part of a
groupoid with underlying reflexive graph $(\spexc{X}\ltimes X, p_1,p_2, i)$
such that $c_X=p_2k$,
the previous theorem produces the following corollaries.
\begin{corollary}\label{corollary:a}
Let $\C$ be a pointed protomodular category in satisfying \wnh{}.  If $X$ is an object such that $\comm{1_X}{1_x}$,
$\spexc{X}$ and $z_{c_X}$
exists, then the morphisms $k \comm{1_X}{1_X}$ and $i z_{c_X}$ commute.
\end{corollary}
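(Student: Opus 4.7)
The plan is to derive Corollary \ref{corollary:a} as a direct specialization of Theorem \ref{main_theorem}(\ref{main_1}) applied to the reflexive graph underlying the canonical groupoid associated with the generic split extension.

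First, I would set up the correspondence with the hypotheses of Theorem \ref{main_theorem}. Taking the reflexive graph $(\spexc{X}\ltimes X, \spexc{X}, p_1, p_2, i)$, which exists because $\spexc{X}$ exists and the generic split extension forms part of a canonical groupoid, I identify $A = \spexc{X}\ltimes X$, $B = \spexc{X}$, $\alpha_1 = p_1$, $\alpha_2 = p_2$, $\beta = i$, and $\kappa = k$ (the kernel of $p_1$, which is the kernel morphism of the generic split extension). With this identification, the composite $\alpha_2 \kappa$ equals $p_2 k$, which by the preliminaries is precisely $c_X$.

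Next, I would produce the morphism $g$ to which Theorem \ref{main_theorem}(\ref{main_1}) will be applied. Since $z_{c_X}$ exists by assumption, I take $g = z_{c_X} : Z_{\spexc{X}}(X, c_X) \to \spexc{X}$. By the very definition of the centralizer, $z_{c_X}$ commutes with $c_X = \alpha_2 \kappa$. Together with the standing hypothesis that $\comm{1_X}{1_X}$ exists, this verifies both hypotheses of Theorem \ref{main_theorem}(\ref{main_1}).

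Finally, I would invoke Theorem \ref{main_theorem}(\ref{main_1}) to conclude that $\kappa \comm{1_X}{1_X}$ and $\beta g$ commute. Under the identification above this is exactly the assertion that $k \comm{1_X}{1_X}$ and $i z_{c_X}$ commute, which is the statement of the corollary. There is no real obstacle here: all of the work lies in Theorem \ref{main_theorem} itself (where Proposition \ref{proposition_similtanius_lifting} and the lifting lemma are used), and what remains is simply to recognise the centralizer as an instance of the abstract morphism $g$ in the theorem.
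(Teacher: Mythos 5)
Your proposal is correct and coincides with the paper's own (implicit) proof: the paper derives Corollary \ref{corollary:a} precisely by applying Theorem \ref{main_theorem}(\ref{main_1}) to the reflexive graph $(\spexc{X}\ltimes X, \spexc{X}, p_1, p_2, i)$ underlying the canonical groupoid, using $c_X = p_2 k$ and taking $g = z_{c_X}$, which commutes with $c_X$ by definition of the centralizer. Your identifications and verification of the hypotheses match the paper's argument exactly.
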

\begin{corollary}\label{corollary:b}
Let $\C$ be a pointed protomodular category satisfying \wnh{}.
If $X$ is a perfect object in $\C$ such that $\spexc{X}$ exists,
 then $z_{c_X}=0$ and hence $z_{\spexc{X}}=0$.
\end{corollary}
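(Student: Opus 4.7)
The plan is to deduce this corollary directly from Theorem \ref{main_theorem}(b) by instantiating it on the reflexive graph that the groupoid structure on $\spexc{X}\ltimes X$ carries. Take $(A, B, \alpha_1, \alpha_2, \beta) = (\spexc{X}\ltimes X,\ \spexc{X},\ p_1,\ p_2,\ i)$; then the kernel $\kappa$ of $\alpha_1 = p_1$ is precisely the morphism $k$ in the generic split extension \eqref{diagram:generic_split}, and so $\alpha_2\kappa = p_2 k = c_X$. Because a generic split extension is terminal in $K^{-1}(X)$ it is, in particular, faithful; combined with the hypothesis that $X$ is perfect, Theorem \ref{main_theorem}(b) applies and gives $z_{c_X} = 0$ at once. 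This is the first assertion.

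For the second assertion, that $z_{\spexc{X}} = 0$, I interpret $z_{\spexc{X}}$ as the centralizer of $1_{\spexc{X}}$ and check that any morphism into $\spexc{X}$ which commutes with $1_{\spexc{X}}$ automatically commutes with $c_X$, so in particular factors through $z_{c_X}$. Concretely, if $z : Y \to \spexc{X}$ commutes with $1_{\spexc{X}}$ via a cooperator $\varphi : Y \times \spexc{X} \to \spexc{X}$, then $\varphi \circ (1_Y \times c_X) : Y \times X \to \spexc{X}$ serves as a cooperator of $z$ and $c_X$, since precomposition with $\langle 1, 0\rangle$ recovers $z$ and precomposition with $\langle 0, 1\rangle$ recovers $\varphi\langle 0, c_X\rangle = c_X$. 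Hence $z_{\spexc{X}}$ factors through $z_{c_X} = 0$, so $z_{\spexc{X}} = 0$ as desired.

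There is no real obstacle here, since the substantive work is already contained in Theorem \ref{main_theorem}: the first statement is a direct specialization to the canonical reflexive graph underlying $\spexc{X}\ltimes X$, while the second is an elementary consequence of the fact that elements of the center commute with every morphism into the object, combined with triviality of the centralizer of $c_X$. The only sanity check worth performing is that the identifications $\kappa = k$ and $\alpha_2\kappa = c_X$ are genuinely available in the stated generality, which follows from the paragraph preceding the corollaries where the author recalls $c_X = p_2 k$.
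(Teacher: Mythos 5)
Your proof is correct and takes essentially the same route as the paper: the paper likewise obtains $z_{c_X}=0$ by applying Theorem \ref{main_theorem}(\ref{main_2}) to the reflexive graph $(\spexc{X}\ltimes X,\spexc{X},p_1,p_2,i)$, using that the generic split extension is faithful (being terminal in $K^{-1}(X)$) and that $\alpha_2\kappa=p_2k=c_X$. Your explicit check that any morphism commuting with $1_{\spexc{X}}$ also commutes with $c_X$ (via the cooperator $\varphi(1_Y\times c_X)$) and hence factors through $z_{c_X}=0$ correctly supplies the ``hence'' step, which the paper leaves implicit.
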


\begin{remark}
Corollary \ref{corollary:a} applied to the category of groups
shows that if  $\theta : X\to X$ is a central automorphism, then
$\theta([x,y]) = [x,y]$.
Applied to the category of Lie algebras it shows that if $d$ is a central
derivation, then $d([x,y])=0$.
\end{remark}
As an immediate consequence of Corollary \ref{corollary:b} we obtain:
\begin{corollary}
Let $\C$ be a pointed protomodular category admitting all generic split
 extensions and satisfying \wnh{}.
If $X$ is a perfect object in $\C$, then $z_{c_X}=0$ and hence $z_{\spexc{X}}=0$.
\end{corollary}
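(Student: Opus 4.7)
The plan is almost trivial: the hypothesis that $\C$ admits all generic split extensions is by definition precisely the statement that $\spexc{Y}$ exists for every object $Y$ of $\C$. In particular, for the perfect object $X$ in question, $\spexc{X}$ exists, so all hypotheses of Corollary \ref{corollary:b} are met.

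Concretely, I would first unpack the hypothesis ``admits all generic split extensions'' to extract the existence of the generic split extension with kernel $X$, that is, the existence of $\spexc{X}$ together with the conjugation morphism $c_X : X \to \spexc{X}$. Second, I would invoke Corollary \ref{corollary:b} directly on $X$: since $\C$ is pointed protomodular, satisfies \wnh{}, $X$ is perfect, and $\spexc{X}$ exists, the corollary yields $z_{c_X} = 0$ and consequently $z_{\spexc{X}} = 0$.

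There is no real obstacle here; the statement is just the ``globalized'' form of Corollary \ref{corollary:b} obtained by absorbing the existence clause into a blanket assumption on the category. The only thing worth being careful about is that no additional hypothesis (e.g.\ existence of $\comm{1_X}{1_X}$) needs to be separately assumed, since Corollary \ref{corollary:b} already derives the conclusion from perfection of $X$ and the existence of $\spexc{X}$ alone. Thus the entire ``proof'' is a single invocation, and I would present it in one or two lines.
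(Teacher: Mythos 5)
Your proposal is correct and matches the paper exactly: the paper states this corollary with no separate proof, introducing it as ``an immediate consequence of Corollary~3.3'', which is precisely your one-line invocation after unpacking ``admits all generic split extensions'' to the existence of $\spexc{X}$. Your side remark is also accurate --- Corollary~3.3 indeed needs no separate hypothesis on $\comm{1_X}{1_X}$, since in the underlying Theorem the perfection of $X$ already forces $\comm{1_X}{1_X}$ to exist and be an isomorphism.
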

\begin{example}
Recall from \cite{CIGOLI_GRAY_VAN_DER_LINDEN:2015b} that \emph{algebraic
coherence} implies \nh{} which is stronger than \wnh{}. Recall also that there
are plenty of algebraically coherent categories, including
all categories of interest in the sense of G. Orzech \cite{ORZECH:1972}.
It follows that the above results apply to all action representable
algebraically coherent categories. These include
the categories of groups, Lie algebras, boolean rings and cocommutative Hopf
algebras (for action representability of the first three categories see
\cite{BORCEUX_JANELIDZE_KELLY:2005b} and for the last one see 
\cite{GRAN_STERCK_VERCRUYSSE:2019},
for algebraic coherence see 
\cite{CIGOLI_GRAY_VAN_DER_LINDEN:2015b}).
Further examples can be obtained by taking a functor category of, or
the category of internal groupoids in,
a semi-abelian, algebraically coherent category with split extension
classifiers and normalizers (see \cite{GRAY:2013b} and \cite{GRAN_GRAY:2021}).
This produces, amongst many others,
crossed modules of groups and Lie algebras as further examples.
%Each category of internal boolean
%rings in a cartesian closed category has split extension classifiers
%\cite{BORCEUX_JANELIDZE_KELLY:2005a}.
%Since in such a category every object is perfect it follows that \wnh{} holds.
%This means that our result holds in each such category (which need not be
%homological). It seems worth pointing out taht part of the result trivializes
%in such a category, since the center of any object is a zero object (since
%there are no non-trivial abelian objects).
\end{example}
\begin{proposition}
If $\mathcal{V}$ is a variety of semi-abelian algebras satisfying \wnh{}, then 
the category $\mathcal{V}(\mathbf{Top})$ of internal $\mathcal{V}$ algebras in
the category of topological spaces satisfies \wnh{}.
\end{proposition}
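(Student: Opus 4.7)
The plan is to transfer \wnh{} from $\mathcal{V}$ to $\mathcal{V}(\mathbf{Top})$ via the forgetful functor $U \colon \mathcal{V}(\mathbf{Top}) \to \mathcal{V}$. Since $U$ has both a left adjoint (discrete topology) and a right adjoint (indiscrete topology), it preserves every limit and colimit that exists, and in particular kernels, cokernels, binary products, binary coproducts, and the canonical comparison morphism $A + B \to A \times B$. The category $\mathcal{V}(\mathbf{Top})$ is pointed protomodular, inheriting these properties from $\mathcal{V}$. A routine verification confirms that a monomorphism $k \colon K \to X$ in $\mathcal{V}(\mathbf{Top})$ is normal precisely when $Uk$ is normal in $\mathcal{V}$ and $K$ carries the subspace topology from $X$, so that normal subobjects of $X$ correspond bijectively to normal subobjects of $UX$ along $U$.

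The main technical step is to show that $U$ both preserves and reflects Huq commutativity of pairs of morphisms. Preservation is automatic: applying $U$ to a cooperator gives a cooperator. For reflection, take $f \colon A \to C$ and $g \colon B \to C$ in $\mathcal{V}(\mathbf{Top})$ such that $Uf$ and $Ug$ cooperate via some $\varphi \colon UA \times UB \to UC$ in $\mathcal{V}$. This $\varphi$ is the unique factorization of the morphism $[Uf, Ug] \colon UA + UB \to UC$ through the canonical regular epimorphism $UA + UB \to UA \times UB$. Since the canonical comparison in $\mathcal{V}(\mathbf{Top})$ is also a regular epimorphism, and therefore a topological quotient, and since $[f, g] \colon A + B \to C$ is continuous in $\mathcal{V}(\mathbf{Top})$, the factorization $\varphi$ is automatically continuous, placing it inside $\mathcal{V}(\mathbf{Top})$. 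From this it follows that the Huq commutator $\comm{f}{g}$ exists in $\mathcal{V}(\mathbf{Top})$ exactly when $\comm{Uf}{Ug}$ exists in $\mathcal{V}$, and when both exist they agree as subobjects of $C$, with the former carrying the subspace topology.

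With the commutator correspondence in hand, the proposition follows immediately: if $k \colon K \to X$ is normal in $\mathcal{V}(\mathbf{Top})$ and $\comm{1_K}{1_K}$ exists there, then applying $U$ places us in the hypotheses of \wnh{} for $\mathcal{V}$, which identifies $Uk \cdot U\comm{1_K}{1_K}$ with the commutator of $Uk$ with itself; lifting back along $U$ via the correspondence then shows that $k \cdot \comm{1_K}{1_K}$ is the commutator of $k$ with itself in $\mathcal{V}(\mathbf{Top})$. The principal obstacle I expect is verifying the continuity of the lifted cooperator, which reduces to identifying regular epimorphisms in $\mathcal{V}(\mathbf{Top})$ with topological quotient maps — a general property of regular categories of internal $\mathcal{V}$-algebras in $\mathbf{Top}$, but one that merits explicit justification in the course of the proof.
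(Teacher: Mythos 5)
Your proof is correct, but it takes a genuinely different route from the paper's. The paper's proof is a three-line computation: it invokes Borceux--Clementino to compute equalizers and coequalizers in $\mathcal{V}(\mathbf{Top})$ as in $\mathcal{V}$, equipped with the subspace and quotient topologies respectively, and then uses the closed formula $\comm{1_X}{1_X}=\ke(\coke(\langle 1_X,1_X\rangle)\langle 1,0\rangle)$, so that the Huq commutator is literally calculated as in $\mathcal{V}$ with the subspace topology; the transfer of \wnh{} then reduces to ``a subspace of a subspace is a subspace''. This sidesteps entirely the need to reflect commutativity along the forgetful functor $U$, which is the technical heart of your argument. Your reflection lemma is sound as stated: the comparison $A+B\to A\times B$ is a strong epimorphism by unitality of $\mathcal{V}(\mathbf{Top})$, hence a regular epimorphism by regularity, hence a topological quotient because coequalizers carry the quotient topology --- and the obstacle you flag at the end (regular epimorphisms in $\mathcal{V}(\mathbf{Top})$ are quotient maps, in fact open surjections) is precisely the Borceux--Clementino fact the paper cites, so both arguments ultimately rest on the same input. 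What your route buys is generality: a preservation/reflection statement for arbitrary cooperating pairs and a correspondence for commutators $\comm{f}{g}$ of arbitrary morphisms, with no reliance on the self-commutator formula; what the paper's route buys is brevity. Two small points merit care in your write-up: pointed protomodularity of $\mathcal{V}(\mathbf{Top})$ is itself a theorem of Borceux--Clementino rather than formal inheritance along $U$ (the split short five lemma needs a topological argument, since a continuous bijective homomorphism need not be a homeomorphism); and in the final lifting step the factorization of $k\,\comm{1_K}{1_K}$ through a competing normal monomorphism $n$ is continuous because $n$, being a kernel, is a subspace embedding --- this is where your normal-subobject correspondence is actually used and is worth making explicit.
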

\begin{proof}
Recall that coequalizers and equalizers are calculated as in $\mathcal{V}$, equipped with
the quotient and subspace topology, respectively (see 
\cite{BORCEUX_CLEMENTINO:2005}). This means that
$\comm{1_X}{1_X}= \ke(\coke(\langle 1_X,1_X\rangle) \langle 1,0\rangle)$
is calculated as in $\mathcal{V}$ with the subspace topology. Since a subspace 
of a subspace is a subspace of the whole sapce, this also implies
that a normal monomorphism in $\mathcal{V}(\mathbf{Top})$ is characteristic
if and only if it is characteristic in $\mathcal{V}$.
\end{proof}
\begin{example}
The category of topological groups has been shown to be action
representable \cite{CAGLIARI_CLEMENTINO:2019} and satisfies \wnh{} by the previous proposition.
It follows that our results hold for the category of topological groups.
\end{example}
\providecommand{\bysame}{\leavevmode\hbox to3em{\hrulefill}\thinspace}
\providecommand{\MR}{\relax\ifhmode\unskip\space\fi MR }
% \MRhref is called by the amsart/book/proc definition of \MR.
\providecommand{\MRhref}[2]{%
  \href{http://www.ams.org/mathscinet-getitem?mr=#1}{#2}
}
\providecommand{\href}[2]{#2}


\begin{thebibliography}{10}

\bibitem{ADNEY_YEN:1965}
J.~E. Adney and T.~Yen, \emph{Automorphisms of a {$p$}-group}, Illinois Journal
  of Mathematics \textbf{9}, 137--143, 1965.

\bibitem{BARR:1971}
M.~Barr, \emph{Exact categories}, in: Lecture Notes in Mathematics
  \textbf{236}, 1--120, 1971.

\bibitem{BORCEUX_CLEMENTINO:2005}
F.~Borceux and M.~M. Clementino, \emph{Topological semi-abelian algebras},
  Advances in Mathematics \textbf{190}(2), 425--453, 2005.

\bibitem{BORCEUX_BOURN:2007}
F.~Borceux and D.~Bourn, \emph{Split extension classifier and centrality},
  Contemporary Mathematics, Categories in algebra, geometry and mathematical
  physics \textbf{431~}, American Mathematical Society, Providence, RI,
  85--104, 2007.

\bibitem{BORCEUX_JANELIDZE_KELLY:2005a}
F.~Borceux, G.~Janelidze, and G.~M. Kelly, \emph{Internal object actions},
  Commentationes Mathematicae Universitatis Carolinae \textbf{46}(2), 235--255,
  2005.

\bibitem{BORCEUX_JANELIDZE_KELLY:2005b}
F.~Borceux, G.~Janelidze, and G.~M. Kelly, \emph{On the representability of
  actions in a semi-abelian category}, Theory and Applications of Categories
  \textbf{14}(11), 244--286, 2005.

\bibitem{BOURN:1991}
D.~Bourn, \emph{Normalization equivalence, kernel equivalence and affine
  categories}, Lecture Notes in Mathematics, Category theory ({C}omo, 1990)
  \textbf{1488~}, Springer, Berlin, 43--62, 1991.

\bibitem{BOURN:2013b}
D.~Bourn, \emph{Two ways to centralizers of equivalence relations}, Applied
  categorical structures \textbf{22}(5-6), 833--856, 2014.

\bibitem{BOURN_GRAN:2002}
D.~Bourn and M.~Gran, \emph{Central extensions in semi-abelian categories}
  \textbf{175~}, 31--44, 2002, Special volume celebrating the 70th birthday of
  Professor Max Kelly.

\bibitem{BOURN_JANELIDZE:1998}
D.~Bourn and G.~Janelidze, \emph{Protomodularity, descent and semidirect
  products}, Theory and Applications of Categories \textbf{4}(2), 37--46, 1998.

\bibitem{BOURN_JANELIDZE:2009}
D.~Bourn and G.~Janelidze, \emph{Centralizers in action accessible categories},
  Cahiers de Topologie et G\'eom\'etrie Diff\'erentielles Cat\'egoriques
  \textbf{50}(3), 211--232, 2009.

\bibitem{CAGLIARI_CLEMENTINO:2019}
F.~Cagliari and M.~M. Clementino, \emph{Topological groups have representable
  actions}, Bulletin of the Belgian Mathematical Society. Simon Stevin
  \textbf{26}(4), 519--526, 2019.

\bibitem{CIGOLI_MANTOVANI:2012}
A.~Cigoli and S.~Mantovani, \emph{Action accessibility via centralizers},
  Journal of Pure and Applied Algebra \textbf{216}(8-9), 1852--1865, 2012.

\bibitem{CIGOLI_GRAY_VAN_DER_LINDEN:2015a}
A.~S. Cigoli, J.~R.~A. Gray, and T.~Van~der Linden, \emph{On the normality of
  {H}iggins commutators}, Journal of Pure and Applied Algebra \textbf{219}(4),
  897--912, 2015.

\bibitem{CIGOLI_MONTOLI:2015}
A.~S. Cigoli and A.~Montoli, \emph{Characteristic subobjects in semi-abelian
  categories}, Theory and Applications of Categories \textbf{30}, 206--228,
  2015.

\bibitem{CIGOLI_GRAY_VAN_DER_LINDEN:2015b}
A.~S. Cigoli, J.~R.~A. Gray, and T.~{Van der Linden}, \emph{Algebraically
  coherent categories}, Theory and Applications of Categories \textbf{30}(54),
  1864--1905, 2015.

\bibitem{GRAN_GRAY:2021}
M.~Gran and J.~R.~A. Gray, \emph{Action representability of the category of
  internal groupoids}, Theory and Applications of Categories \textbf{37}, Paper
  No. 1, 1--13, 2021.

\bibitem{GRAN_STERCK_VERCRUYSSE:2019}
M.~Gran, F.~Sterck, and J.~Vercruysse, \emph{A semi-abelian extension of a
  theorem by takeuchi}, Journal of Pure and Applied Algebra, 2019.

\bibitem{GRAY:2013b}
J.~R.~A. Gray, \emph{Normalizers, centralizers and action representability in
  semi-abelian categories}, Applied Categorical Structures \textbf{22}(5-6),
  981--1007, 2014.

\bibitem{GRAY:2022}
J.~R.~A. Gray, \emph{Complete objects in categories}, Journal of Pure and
  Applied Algebra \textbf{226}(4), Paper No. 106857, 26, 2022.

\bibitem{GRAY_VAN_DER_LINDEN:2015}
J.~R.~A. Gray and T.~V. der Linden, \emph{Peri-abelian categories and the
  universal central extension condition}, Journal of Pure and Applied Algebra
  \textbf{219}(7), 2506--2520, 2015.

\bibitem{HUQ:1968}
S.~A. Huq, \emph{Commutator, nilpotency and solvability in categories},
  Quarterly Journal of Mathematics \textbf{19}(1), 363--389, 1968.

\bibitem{JANELIDZE_MARKI_THOLEN:2002}
G.~Janelidze, L.~M\'arki, and W.~Tholen, \emph{Semi-abelian categories},
  Journal of Pure and Applied Algebra \textbf{168}, 367--386, 2002.

\bibitem{ORZECH:1972}
G.~Orzech, \emph{Obstruction theory in algebraic categories \uppercase{I, II}},
  Journal of Pure and Applied Algebra \textbf{2}(4), 287--314 and 315--340,
  1972.

\end{thebibliography}
\end{document}